\documentclass{amsart}
\usepackage{amssymb}
\usepackage{amsmath}
\usepackage{amsfonts}
\usepackage{mathrsfs}
\usepackage{hyperref}

\newtheorem{theorem}{Theorem}[section]
\newtheorem{lemma}[theorem]{Lemma}

\theoremstyle{definition}

\theoremstyle{remark}

\numberwithin{equation}{section}

\begin{document}
\title[BSE Property for Weighted Dirichlet Series]{The BSE Property for Weighted Banach Algebras of Dirichlet Series}
\author[P. A. Dabhi]{Prakash A. Dabhi}
\address{Institute of Infrastructure Technology Research and Management (IITRAM), Ahmedabad - 380026, Gujarat, India}
\email{lightatinfinite@gmail.com, prakashdabhi@iitram.ac.in}
\thanks{The author is grateful to the National Board for Higher Mathematics (NBHM), India, for the research grant (02011/39/2025/NBHM(R. P.)/R \& D II/16090).}
\subjclass[2020]{Primary 46J05, 43A20, 30B50; Secondary 43A22, 46B10}
\keywords{BSE-algebra, Dirichlet series, weight, semicharacter, Gel'fand space, $L$-projection, bidual space}
\begin{abstract}
Let $\omega: \mathbb{N} \to [1, \infty)$ be a weight on the multiplicative semigroup of natural numbers. We investigate the Bochner-Schoenberg-Eberlein (BSE) property for the weighted Banach algebra of Dirichlet series $A_\omega = \ell^1(\mathbb{N}, \omega)$ under Dirichlet convolution. By employing Bohr's correspondence, we establish a weak-$\ast$ approximation framework for $\omega$-bounded semicharacters using scaled truncations that lie within the predual space $c_0(\mathbb{N}, 1/\omega)$. Utilizing the canonical $L$-projection decomposition of the bidual space $A_\omega^{\ast\ast} = A_\omega \oplus c_0(\mathbb{N}, 1/\omega)^\perp$ alongside the weak-$\ast$ compactness of the unit ball via the Banach-Alaoglu theorem, we prove that $A_\omega$ is a BSE-algebra for any weight. Furthermore, we show that the intrinsic BSE-norm on the Gel'fand transforms is isometric to the natural weighted algebra norm $\|\cdot\|_\omega$.
\end{abstract}

\maketitle

\section{Introduction and Preliminaries}

The characterization of functions that can be represented as the transform of an element within a given Banach algebra is a foundational cornerstone of harmonic analysis. The classical prototype of this inquiry traces back to the Bochner-Schoenberg-Eberlein (BSE) theorem, established sequentially by Bochner \cite{bochner}, Schoenberg \cite{schoenberg}, and Eberlein \cite{eberlein}. In its original setting, the theorem establishes that a bounded continuous function $\sigma$ on the dual group $\widehat{G}$ of a locally compact abelian group $G$ is the Fourier-Stieltjes transform of a bounded complex Radon measure on $G$ if and only if it can be extended as a bounded linear functional on the span of $\widehat G$ with the dual norm of $L^1(G)^\ast$.

Takahasi and Hatori \cite{takahasi1990} lifted this classical property to the abstract setting of commutative semisimple Banach algebras, introducing the notion of a \emph{BSE-algebra}. In recent years, the BSE property has attracted significant attention across diverse functional analytic frameworks. For instance, Kaniuth and \"Ulger \cite{kaniuth2010} comprehensively investigated the BSE property for Fourier and Segal algebras, while Hatori and Takahasi \cite{hatori2001} explored generalized BSE-inequalities. Concurrently, recent investigations have actively extended these properties to arithmetic semigroup algebras such as $\ell^1(\mathbb N_{\gcd})$ and $\ell^1(\mathbb N_{\text{lcm}})$ \cite{dabhi2026}. While the BSE property has been extensively mapped for these group and semigroup structures, its extension to spaces of Dirichlet series provides a distinct analytical challenge. The study of Dirichlet series via modern Banach algebra techniques bridges classical number theory with infinite-dimensional holomorphicity \cite{bayart, helson}. By determining whether a weighted algebra of formal Dirichlet series $A_\omega = \ell^1(\mathbb N, \omega)$ is a BSE-algebra, we establish how closely the topological dual forces the global analytic functions on its infinite-dimensional Gel'fand space to reflect elements of the sequence space itself. Furthermore, we prove a strict isometric equivalence between the algebra norm and the intrinsic BSE-norm.

Let $A$ be a commutative Banach algebra. A nonzero linear functional $\varphi:A \to \mathbb C$ is a complex homomorphism if $\varphi(ab)=\varphi(a)\varphi(b)$ for all $a, b \in A$. Let $\Phi_A$ be the collection of all complex homomorphisms on $A$. For $a \in A$, let $\widehat a:\Phi_A \to \mathbb C$ be defined by $\widehat a(\varphi)=\varphi(a)$ for all $\varphi \in \Phi_A$. The map $\widehat a$ is the \emph{Gel'fand transform} of $a$. Let $\widehat A=\{\widehat a:a\in A\}$. The weakest topology on $\Phi_A$ making each element of $\widehat A$ continuous is the \emph{Gel'fand topology} on $\Phi_A$, and $\Phi_A$ equipped with the Gel'fand topology is the \emph{Gel'fand space} of $A$. The Gel'fand space $\Phi_A$ of $A$ is a locally compact Hausdorff space and it is compact when $A$ is unital. A commutative Banach algebra $A$ is \emph{semisimple} if $\bigcap_{\varphi \in \Phi_A} \ker\varphi=\{0\}$, where $\ker\varphi=\{a \in A:\varphi(a)=0\}$.

Let $A$ be a commutative Banach algebra without order, i.e., if $aA=\{0\}$, then $a=0$. A map $T:A \to A$ is a \emph{multiplier} \cite{larsen1971} on $A$ if $T(ab)=aTb=(Ta)b$ for all $a, b \in A$. Let $\mathcal M(A)$ be the collection of all multipliers on $A$. Then, as detailed by Kaniuth \cite[Proposition 1.4.11]{kaniuth2009}, $\mathcal M(A)$ is a unital commutative Banach algebra with the operator norm $\|T\|=\sup\{\|Ta\|:a \in A, \|a\|\leq 1\}$ for all $T\in \mathcal M(A)$. The Banach algebra $A$ is embedded in $\mathcal M(A)$ via the map $a\mapsto L_a$, where $L_a(b)=ab$ for all $b \in A$. Also, $A$ is unital if and only if $\mathcal M(A)=A$. By \cite[Theorem 1.2.2]{larsen1971}, $T \in \mathcal M(A)$ if and only if there is a unique bounded continuous function $\widehat T:\Phi_A\to \mathbb C$ such that $\widehat{Ta}(\varphi)=\widehat T(\varphi)\widehat a(\varphi)$ for all $a \in A$ and $\varphi \in \Phi_A$. Let $\widehat{\mathcal M(A)}=\{\widehat T:T \in \mathcal M(A)\}$. Note that if $A$ is semisimple, then $A$ is without order.

Let $A$ be a commutative Banach algebra without order. A continuous function $\sigma:\Phi_A\to \mathbb C$ is a \emph{BSE-function} \cite{takahasi1990} if there is a positive constant $C$ such that
\begin{equation}\label{bse_ineq}
\left|\sum_{i=1}^n c_i\sigma(\varphi_i)\right|\leq C\left\|\sum_{i=1}^n c_i\varphi_i\right\|_{A^\ast},
\end{equation}
whenever $n \in \mathbb N$, $\varphi_1, \varphi_2,\ldots, \varphi_n$ are in $\Phi_A$ and $c_1,c_2,\ldots,c_n \in \mathbb C$. The smallest such $C$ is the \emph{BSE-norm}, $\|\sigma\|_{\operatorname{BSE}}$, of $\sigma$. Let $C_{\operatorname{BSE}}(\Phi_A)$ be the collection of all BSE-functions. Then, by Takahasi and Hatori \cite{takahasi1990}, $(C_{\operatorname{BSE}}(\Phi_A),\|\cdot\|_{\operatorname{BSE}})$ is a semisimple commutative Banach algebra. The Banach algebra $A$ is a \emph{BSE-algebra} if $C_{\operatorname{BSE}}(\Phi_A)=\widehat{\mathcal M(A)}$. The acronym BSE stands for \emph{Bochner-Schoenberg-Eberlein}. Bochner, in \cite{bochner}, established that the group algebra $L^1(\mathbb R)$ is a BSE-algebra, and Schoenberg \cite{schoenberg} and Eberlein \cite{eberlein} subsequently extended this framework to any locally compact abelian group $G$, proving that $L^1(G)$ is a BSE-algebra.

The incorporation of a weight is a vital tool in modern harmonic analysis. As initially pioneered by Beurling \cite{beurling1938} for group algebras, weights provide a precise mechanism to control the decay rates of coefficients and directly govern the analytic regularity of the corresponding transforms. The critical role of the weight was recently emphasized in \cite{dabhi_beurling}, where it is established that the BSE and BED properties for the Beurling algebra $L^1(G,\omega)$ rely on the specific structural and bounding conditions imposed on $\omega$. In the specific setting of Dirichlet series, the chosen weight strictly determines the abscissa of absolute convergence and dictates the geometric boundaries of the underlying Gel'fand space. Let $\mathbb H$ be the closed right half plane, defined as $\mathbb H=\{x+iy \in \mathbb C:x\geq 0\}$. Let $\omega:\mathbb N \to [1,\infty)$ be a \emph{weight}, i.e., $\omega(mn)\leq \omega(m)\omega(n)$ for all $m,n \in \mathbb N$. Let $A_\omega$ be the collection of all functions $f:\mathbb H \to \mathbb C$ such that $f$ can be represented as $f(s)=\sum_{n=1}^\infty \frac{a(n)}{n^s}$ for all $s \in \mathbb H$ and $\|f\|_\omega=\sum_{n=1}^\infty |a(n)|\omega(n)<\infty$. Then $(A_\omega,\|\cdot\|_\omega)$ is a unital commutative Banach algebra with this norm and the usual pointwise multiplication of functions. Since $A_\omega$ is unital, $\mathcal M(A_\omega)=A_\omega$. Thus, $A_\omega$ is a BSE-algebra if and only if $C_{\operatorname{BSE}}(\Phi_{A_\omega})=\widehat{A_\omega}$. The space $A_\omega$ is the \emph{weighted Banach algebra of Dirichlet series}.

Consider the usual multiplication on $\mathbb N$ as a binary operation. Let $\omega$ be a weight on $\mathbb N$. Let $$\ell^1(\mathbb N,\omega)=\left\{f=(a(n)):\mathbb N \to \mathbb C:\|f\|_\omega=\sum_{n=1}^\infty|a(n)|\omega(n)<\infty\right\}.$$For $f=(a(n))$ and $g=(b(n))$ in $\ell^1(\mathbb N,\omega)$, define their \emph{(Dirichlet) convolution} $f\star g$ by
\begin{equation}\label{convolution}
(f\star g)(n)=\sum_{uv=n}f(u)g(v)=\sum_{d\mid n}f(d)g\left(\frac{n}{d}\right)\quad(n\in \mathbb N).
\end{equation}
Then $(\ell^1(\mathbb N,\omega),\|\cdot\|_\omega)$ is a unital commutative Banach algebra with the above convolution as multiplication. The Banach algebras $A_\omega$, defined above, and $\ell^1(\mathbb N,\omega)$ are isometrically isomorphic via a map $f=(a(n))\mapsto f(s)=\sum_{n=1}^\infty \frac{a(n)}{n^s}$ from $\ell^1(\mathbb N,\omega)$ onto $A_\omega$.

A nonzero map $\chi:\mathbb N \to \mathbb C$ is an $\omega$-bounded semicharacter if $\chi(mn)=\chi(m)\chi(n)$ and $|\chi(n)|\leq \omega(n)$ for all $m, n \in \mathbb N$. Let $\widehat{\mathbb N_\omega}$ be the collection of all $\omega$-bounded semicharacters on $\mathbb N$. By standard semigroup algebra theory \cite{hewitt1956}, a map $\varphi:A_\omega\to \mathbb C$ is a complex homomorphism if and only if there is a unique $\chi \in \widehat{\mathbb N_\omega}$ such that $\varphi=\varphi_\chi$, where $$\varphi_\chi(f)=\sum_{n=1}^\infty a(n)\chi(n)\quad(f(s)=\sum_{n=1}^\infty \frac{a(n)}{n^s}\in A_\omega).$$ Thus there is a one-to-one correspondence between $\widehat{\mathbb N_\omega}$ and $\Phi_{A_\omega}$ given by $\chi\mapsto \varphi_\chi$. We shall not differentiate between $\chi$ and $\varphi_\chi$. By Bohr's correspondence \cite{bohr1913}, any $\omega$-bounded semicharacter $\chi$ is uniquely determined by its values on the prime numbers. This allows $\widehat{\mathbb N_\omega}=\Phi_{A_\omega}$ to be topologically identified with a compact subset of $\mathbb C^\infty$. Because $\omega(n) \geq 1$ for all $n \in \mathbb N$, we have a continuous inclusion $A_\omega \subseteq \ell^1(\mathbb N)$. This implies that the Gel'fand space $\Phi_{A_\omega}$ of $A_\omega$ contains the standard infinite closed polydisc $\mathbb D^\infty$, the Gel'fand space of $\ell^1(\mathbb N)=A_1$, where $\mathbb D=\{z\in \mathbb C:|z|\leq 1\}$. For our scaling arguments, we also utilize the arithmetic function $\Omega(n)$, which denotes the total number of prime factors of an integer $n$ counted with exact multiplicity.

The dual space of $A_\omega$, denoted by $A_\omega^\ast$, is isometrically isomorphic to the weighted supremum space $\ell^\infty(1/\omega)$, where $\ell^\infty(1/\omega)=\{x=(x(n)):x(n)\in \mathbb C, \|x\|=\sup_n\frac{|x(n)|}{\omega(n)}<\infty\}$. The norm of any continuous linear functional $\Psi \in A_\omega^\ast$ is evaluated by the supremum over the canonical basis elements, giving $\|\Psi\|_{A_\omega^\ast} = \sup_{n \in \mathbb N} |\Psi(n)| / \omega(n)$. Furthermore, $A_\omega$ is the dual space of the weighted null sequence space $c_0(\mathbb N, 1/\omega)$. Here $c_0(\mathbb N,1/\omega)$ is the Banach space of all complex sequences $x=(x(n))$ such that $\frac{x(n)}{\omega(n)}\to 0$ as $n\to \infty$ and with the norm $\|x\|=\sup_n\frac{|x(n)|}{\omega(n)}$. By standard Banach space duality, detailed comprehensively by Dales \cite{dales2000}, the bidual space $A_\omega^{\ast\ast}$ admits a canonical $L$-projection decomposition $A_\omega^{\ast\ast} = A_\omega \oplus c_0(\mathbb N, 1/\omega)^\perp$. For any functional $\Phi \in A_\omega^{\ast\ast}$, this decomposition uniquely separates it into an absolutely summable sequence component $f \in A_\omega$ and a singular component $\Phi_{\text{sing}}$ that vanishes entirely on $c_0(\mathbb N, 1/\omega)$. Crucially, the norm satisfies $\|\Phi\| = \|f\|_\omega + \|\Phi_{\text{sing}}\|$, a property that directly facilitates the unconditional bounding in our main results.

\section{Main Results and Proof}
Let $\{p_1,p_2,\ldots\}$ be the collection of all primes such that $p_1<p_2<\cdots$.
\begin{lemma}\label{lem:character_approx}
Let $\omega$ be a weight on $\mathbb N$. Fix $\varphi \in \Phi_{A_\omega}$, $k \in \mathbb N$ and $0 < r < 1$. Define the sequence $Z_{k,r} \in \mathbb C^{\mathbb N}$ by its values on primes as follows: $Z_{k,r}(p_i) = r \varphi(p_i)$ for $i \leq k$, and $Z_{k,r}(p_i) = 0$ for $i > k$. Extend it to all $n \in \mathbb N$ so that it is multiplicative. Define $\varphi_k \in \Phi_{A_\omega}$ to be the semicharacter matching $\varphi$ on the first $k$ primes and vanishing on all other primes. Then the following statements hold.
\begin{enumerate}
    \item The sequence $Z_{k,r}$ belongs to $\Phi_{A_\omega} \cap c_0(\mathbb N, 1/\omega)$.
    \item $Z_{k,r}\to\varphi_k$ in the weak-$\ast$ topology as $r \nearrow 1$.
    \item $\varphi_k\to\varphi$ in the weak-$\ast$ topology as $k \to \infty$.
\end{enumerate}
\end{lemma}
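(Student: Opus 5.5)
The plan is to write everything explicitly in terms of $\Omega(n)$ and the prime support of $n$, and then use dominated convergence for the two limit statements. Let $S_k$ be the set of $n\in\mathbb N$ all of whose prime factors lie in $\{p_1,\dots,p_k\}$ (with $1\in S_k$). Since $\varphi$ is an $\omega$-bounded semicharacter, it is completely multiplicative. So the multiplicative extensions in the statement are
\[
Z_{k,r}(n)=r^{\Omega(n)}\varphi(n)\,\mathbf 1_{S_k}(n),\qquad \varphi_k(n)=\varphi(n)\,\mathbf 1_{S_k}(n).
\]
Both maps are multiplicative, because $S_k$ is a multiplicatively closed set whose complement is an ideal, $\Omega$ is completely additive, and the empty product gives value $1$ at $n=1$. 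They are nonzero since their value at $1$ is $1$. The bounds $|\varphi_k(n)|\le|\varphi(n)|\le\omega(n)$ and $|Z_{k,r}(n)|\le r^{\Omega(n)}\omega(n)\le\omega(n)$ show that $\varphi_k$ and $Z_{k,r}$ lie in $\widehat{\mathbb N_\omega}=\Phi_{A_\omega}$. This also justifies the definition of $\varphi_k$ as an element of $\Phi_{A_\omega}$.

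For (1), it remains to show that $|Z_{k,r}(n)|/\omega(n)\to0$. From the bound above, $|Z_{k,r}(n)|/\omega(n)\le r^{\Omega(n)}\mathbf 1_{S_k}(n)$. The key point is finiteness: for each $M$, the set $\{n\in S_k:\Omega(n)\le M\}$ is finite, since it has at most $(M+1)^k$ elements given by the exponent vectors. So for any $\varepsilon>0$, choose $M$ with $r^{M}<\varepsilon$. Then for all sufficiently large $n$, either $n\notin S_k$ or $\Omega(n)>M$, and in both cases the quotient is less than $\varepsilon$. Here $r<1$ is used essentially; this is the only step where the truncation to finitely many primes matters. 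I expect this step to be the main point needing care, though it is elementary.

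For (2) and (3), I interpret the weak-$\ast$ topology as $\sigma(A_\omega^\ast,A_\omega)$ on $\Phi_{A_\omega}\subseteq \ell^\infty(1/\omega)$. On this set it coincides with the Gel'fand topology, so I must show $\sum_n a(n)\psi_\alpha(n)\to\sum_n a(n)\psi(n)$ for every $f=(a(n))\in A_\omega$.

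For (2), fix $n$. Then $Z_{k,r}(n)=r^{\Omega(n)}\varphi_k(n)\to\varphi_k(n)$ as $r\nearrow1$. Each term is dominated by $|a(n)|\omega(n)$, which is summable, so dominated convergence (over $n$, along any sequence $r_j\nearrow1$) gives the claim.

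For (3), fix $n$. Then $n\in S_k$ for all $k\ge$ the index of the largest prime dividing $n$, so $\varphi_k(n)=\varphi(n)$ eventually. The same domination by $|a(n)|\omega(n)$ yields $\varphi_k(f)\to\varphi(f)$.

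Alternatively, one can note that all these semicharacters lie in the unit ball of $\ell^\infty(1/\omega)$. On bounded sets, weak-$\ast$ convergence is equivalent to coordinatewise convergence, because finitely supported sequences are dense in $A_\omega$. This gives the same conclusion.
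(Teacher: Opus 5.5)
Your proof is correct and follows the paper's argument. You use the same explicit formula $Z_{k,r}(n)=r^{\Omega(n)}\varphi(n)$ on $S_k$ (zero off $S_k$), domination by $|a(n)|\omega(n)$ for $r\nearrow 1$, and the tail/eventual-equality argument for $k\to\infty$. Your additions make explicit two points the paper only asserts: the bound that $\{n\in S_k:\Omega(n)\le M\}$ has at most $(M+1)^k$ elements, which justifies ``$\Omega(n)\to\infty$ within $S_k$'', and the check that $\varphi_k$ really is an $\omega$-bounded semicharacter.
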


\begin{proof}
For the first claim, the multiplicative extension of $Z_{k,r}$ means that if $n=p_1^{\alpha_1}\cdots p_k^{\alpha_k}$ for some nonnegative integers $\alpha_1, \ldots, \alpha_k$, then we take $Z_{k,r}(n) = r^{\alpha_1+\cdots+\alpha_k}\varphi(n) = r^{\Omega(n)}\varphi(n)$ and if $p_j|n$ for some $j>k$, then $Z_{k,r}(n)=0$. Since $\varphi \in \Phi_{A_\omega}$, $|\varphi(n)| \leq \omega(n)$ for all $n\in \mathbb N$. As $r < 1$, we have $|Z_{k,r}(n)| \leq r^{\Omega(n)} \omega(n)$ for all $n \in \mathbb N$. As $n \to \infty$ within the support generated by the first $k$ primes, the exponent $\Omega(n) \to \infty$. As $0<r<1$, the ratio $Z_{k,r}(n) / \omega(n)\to 0$ as $n\to \infty$. It means that $Z_{k,r}$ is in $c_0(\mathbb N, 1/\omega)$. Because $|Z_{k,r}(n)| \leq \omega(n)$ for all $n$ and $Z_{k,r}$ is multiplicative, $Z_{k,r}$ is an $\omega$-bounded semicharacter, i.e., $Z_{k,r}\in \Phi_{A_\omega}$.

To prove the second claim, let $S_k$ denote the subsemigroup generated by the first $k$ primes. Let $g(s) = \sum_{n=1}^\infty a(n) n^{-s}$ be in $A_\omega$. Then
$$|Z_{k,r}(g) - \varphi_k(g)| \leq \sum_{n \in S_k} |a(n)| |\varphi(n)| (1 - r^{\Omega(n)}).$$
Because $|\varphi(n)| \leq \omega(n)$ for all $n$, the individual terms in this sum are bounded by $|a(n)| \omega(n)$. Since $g \in A_\omega$, $\sum_{n=1}^\infty |a(n)| \omega(n)<\infty$. For every fixed $n \in S_k$, the factor $(1 - r^{\Omega(n)})$ goes to zero as $r \nearrow 1$. It follows using the dominated convergence theorem that $\lim_{r\nearrow 1}\sum_{n \in S_k} |a(n)| |\varphi(n)| (1 - r^{\Omega(n)})=0$. This proves that $Z_{k,r}(g) \to \varphi_k(g)$ as $r\nearrow 1$. It means that $Z_{k,r}$ converges in the weak-$\ast$ topology to the semicharacter $\varphi_k$ as $r\nearrow 1$.

For the third claim, we take the limit as $k \to \infty$. For the same $g(s) \in A_\omega$, we have
$$|\varphi(g) - \varphi_k(g)| \leq \sum_{n \notin S_k} |a(n)| \omega(n).$$
Because $\|g\|_\omega=\sum_{n=1}^\infty|a(n)|\omega(n)<\infty$, the tail sum $\sum_{n \notin S_k} |a(n)| \omega(n) \to 0$ as $k \to \infty$. It means that $\varphi_k(g) \to \varphi(g)$ as $k\to \infty$. Therefore the sequence $(\varphi_k)$ converges to $\varphi$ in the weak-$\ast$ topology.
\end{proof}

\begin{theorem}\label{main}
For any weight $\omega: \mathbb N \to [1, \infty)$, the Banach algebra $A_\omega$ is a $\operatorname{BSE}$-algebra.
\end{theorem}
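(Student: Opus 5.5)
The goal is to show $C_{\operatorname{BSE}}(\Phi_{A_\omega})\subseteq\widehat{A_\omega}$. The reverse inclusion is standard. For $f=(a(n))$ and any finite combination $\Psi=\sum c_i\varphi_i$ we have $\bigl|\sum c_i\widehat f(\varphi_i)\bigr|=|\Psi(f)|\le\|f\|_\omega\|\Psi\|_{A_\omega^\ast}$. Continuity of $\widehat f$ in the Gel'fand topology is automatic.

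Fix $\sigma\in C_{\operatorname{BSE}}(\Phi_{A_\omega})$. Let $E=\operatorname{span}\Phi_{A_\omega}\subseteq A_\omega^\ast=\ell^\infty(1/\omega)$. The BSE inequality \eqref{bse_ineq} says that the functional $\sum c_i\varphi_i\mapsto\sum c_i\sigma(\varphi_i)$ is well defined and bounded on $E$ with norm $\|\sigma\|_{\operatorname{BSE}}$. By Hahn--Banach it extends to some $\Phi\in A_\omega^{\ast\ast}$ with $\|\Phi\|=\|\sigma\|_{\operatorname{BSE}}$ and $\Phi(\varphi)=\sigma(\varphi)$ for all $\varphi\in\Phi_{A_\omega}$.

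Next I apply the $L$-decomposition $\Phi=f+\Phi_{\rm sing}$ with $f\in A_\omega$, $\Phi_{\rm sing}\in c_0(\mathbb N,1/\omega)^\perp$ and $\|\Phi\|=\|f\|_\omega+\|\Phi_{\rm sing}\|$. By Lemma~\ref{lem:character_approx}(1), each $Z_{k,r}$ lies in $\Phi_{A_\omega}\cap c_0(\mathbb N,1/\omega)$, so $\Phi_{\rm sing}(Z_{k,r})=0$. This gives
\[\sigma(Z_{k,r})=\Phi(Z_{k,r})=Z_{k,r}(f)=\widehat f(Z_{k,r}).\]
By Lemma~\ref{lem:character_approx}(2), $Z_{k,r}\to\varphi_k$ weak-$\ast$, hence in the Gel'fand topology, as $r\nearrow1$. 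Both $\sigma$ and $\widehat f$ are continuous on $\Phi_{A_\omega}$, so $\sigma(\varphi_k)=\widehat f(\varphi_k)$. Lemma~\ref{lem:character_approx}(3) together with the same continuity then gives $\sigma(\varphi)=\widehat f(\varphi)$ for every $\varphi$. Thus $\sigma=\widehat f\in\widehat{A_\omega}$.

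For the isometry claim, I would note $\|\widehat f\|_{\operatorname{BSE}}\le\|f\|_\omega$ from the first paragraph, and $\|f\|_\omega\le\|\Phi\|=\|\sigma\|_{\operatorname{BSE}}$ from the $L$-decomposition.

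The main obstacle is justifying that weak-$\ast$ convergence in Lemma~\ref{lem:character_approx} is exactly Gel'fand-topology convergence in $\Phi_{A_\omega}$. This holds because the Gel'fand topology is, by definition, the restriction of $\sigma(A_\omega^\ast,A_\omega)$, and $\sigma$ is assumed continuous there. That identification is what lets us avoid having to know $\Phi$ is weak-$\ast$ continuous on $E$.

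Two points also need care. First, the limits are taken in the order $r\nearrow1$ and then $k\to\infty$, each through a net, so no uniformity is required. Second, the truncated semicharacters with vanishing tails really are $\omega$-bounded, which is immediate since $|\varphi_k(n)|\le|\varphi(n)|$ or $\varphi_k(n)=0$.
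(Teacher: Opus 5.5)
Your proof is correct and, from the $L$-decomposition onward, is the same as the paper's: $\Phi_{\rm sing}$ vanishes on $Z_{k,r}\in c_0(\mathbb N,1/\omega)$, and then you take the limits $r\nearrow 1$ and $k\to\infty$ from Lemma~\ref{lem:character_approx}, using continuity of $\sigma$ and $\widehat f$. The only difference is how you obtain $\Phi\in A_\omega^{\ast\ast}$: you extend $\sum c_i\varphi_i\mapsto\sum c_i\sigma(\varphi_i)$ from $\operatorname{span}\Phi_{A_\omega}$ by Hahn--Banach, whereas the paper takes a weak-$\ast$ cluster point of the bounded net $(f_\alpha)$ supplied by \cite[Theorem 4(i)]{takahasi1990}; your version is more self-contained, since it bypasses that net characterization and the Banach--Alaoglu step and gives $\|\Phi\|=\|\sigma\|_{\operatorname{BSE}}$ directly.
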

\begin{proof}
Since $A_\omega$ is unital, by \cite[Corollary 5]{takahasi1990}, $\widehat{\mathcal M(A_\omega)}\subset C_{\operatorname{BSE}}(\Phi_{A_\omega})$. For the reverse inclusion, take any $\sigma \in C_{\operatorname{BSE}}(\Phi_{A_\omega})$. Then, by \cite[Theorem 4(i)]{takahasi1990}, there exists a net $(f_\alpha)$ in $A_\omega$ such that $\|f_\alpha\|_\omega \leq \|\sigma\|_{\operatorname{BSE}}$ and $\widehat{f_\alpha}(\varphi) \to \sigma(\varphi)$ for all $\varphi \in \Phi_{A_\omega}$. By the Banach-Alaoglu theorem, the bounded net $(f_\alpha)$ possesses a weak-$*$ cluster point $\Phi$ in the bidual $A_\omega^{**} = \ell^\infty(\mathbb N, 1/\omega)^*$. By passing to a suitable subnet, we may assume without loss of generality that the net $(f_\alpha)$ itself converges to $\Phi$ in the weak-$*$ topology. The weak-$*$ lower semicontinuity of the norm guarantees $\|\Phi\| \leq \liminf \|f_\alpha\|_\omega \leq \|\sigma\|_{\operatorname{BSE}}$. Furthermore, because every semicharacter $\varphi \in \Phi_{A_\omega}$ is an element of the dual space $A_\omega^*$, this weak-$*$ convergence implies that $\Phi(\varphi) = \lim_\alpha \widehat{f_\alpha}(\varphi)$. Since the original net was defined such that its Gel'fand transform converges pointwise to $\sigma$, the subnet inherits this exact limit, yielding $\Phi(\varphi) = \sigma(\varphi)$.

Using the canonical $L$-projection decomposition $A_\omega^{**} = A_\omega \oplus c_0(\mathbb N, 1/\omega)^\perp$, we can uniquely write $\Phi = f + \Phi_{\text{sing}}$, where $f \in A_\omega$ is the absolutely summable part and $\Phi_{\text{sing}}$ is a functional vanishing entirely on $c_0(\mathbb N, 1/\omega)$. Because this decomposition is an $L$-projection, the norm satisfies $\|\Phi\| = \|f\|_\omega + \|\Phi_{\text{sing}}\|$. This gives $\|f\|_\omega \leq \|\Phi\| \leq \|\sigma\|_{\operatorname{BSE}}$.

We now show that $\widehat{f} = \sigma$. Fix an arbitrary semicharacter $\varphi \in \Phi_{A_\omega}$. For any $k \in \mathbb N$ and $0 < r < 1$, construct the sequences $Z_{k,r}$ and $\varphi_k$ as defined in Lemma \ref{lem:character_approx}. By the first claim of the lemma, $Z_{k,r} \in \Phi_{A_\omega}$, which implies $\sigma(Z_{k,r}) = \Phi(Z_{k,r})$. Since $Z_{k,r}$ also belongs to $c_0(\mathbb N, 1/\omega)$, the singular component $\Phi_{\text{sing}}$ vanishes on $Z_{k,r}$. This gives $$\sigma(Z_{k,r}) = \Phi(Z_{k,r}) = f(Z_{k,r}) + \Phi_{\text{sing}}(Z_{k,r}) = \widehat{f}(Z_{k,r}).$$

We establish the final equality via continuity. By the second claim of Lemma \ref{lem:character_approx}, $Z_{k,r} \to \varphi_k$ in the weak-$\ast$ topology as $r \nearrow 1$. The weak-$\ast$ continuity of $\sigma$ and $\widehat{f}$ implies $\sigma(\varphi_k) = \widehat{f}(\varphi_k)$ for all $k \in \mathbb N$. Finally, by the third claim of Lemma \ref{lem:character_approx}, $\varphi_k \to \varphi$ in the weak-$\ast$ topology as $k \to \infty$. Applying the continuity of $\sigma$ and $\widehat{f}$ once more gives
$$\sigma(\varphi) = \lim_{k \to \infty} \sigma(\varphi_k) = \lim_{k \to \infty} \widehat{f}(\varphi_k) = \widehat{f}(\varphi).$$
This proves that $C_{\operatorname{BSE}}(\Phi_{A_\omega}) = \widehat{A_\omega}$.
\end{proof}

The following theorem establishes that the intrinsic BSE-norm is isometrically equivalent to the algebra norm.

\begin{theorem}
If $\omega$ is a weight on $\mathbb{N}$, then $\|\widehat{f}\|_{\operatorname{BSE}} = \|f\|_\omega$ for all $f \in A_\omega$.
\end{theorem}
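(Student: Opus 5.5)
The plan is to prove the two inequalities $\|\widehat f\|_{\operatorname{BSE}}\le \|f\|_\omega$ and $\|f\|_\omega\le \|\widehat f\|_{\operatorname{BSE}}$ separately. The first is a direct estimate. The second follows by rerunning the argument of Theorem \ref{main} with $\sigma=\widehat f$ and then using semisimplicity to identify the element it produces with $f$.

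\emph{Upper bound.} Fix $f\in A_\omega$, finitely many $\varphi_1,\ldots,\varphi_n\in\Phi_{A_\omega}$ and scalars $c_1,\ldots,c_n\in\mathbb C$. Each $\varphi_i$ is a bounded linear functional on $A_\omega$, so
$$\left|\sum_{i=1}^n c_i\widehat f(\varphi_i)\right|=\left|\Big(\sum_{i=1}^n c_i\varphi_i\Big)(f)\right|\le \|f\|_\omega\left\|\sum_{i=1}^n c_i\varphi_i\right\|_{A_\omega^\ast}.$$
Thus $\widehat f$ satisfies \eqref{bse_ineq} with $C=\|f\|_\omega$, and hence $\|\widehat f\|_{\operatorname{BSE}}\le\|f\|_\omega$.

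\emph{Lower bound.} Put $\sigma=\widehat f\in C_{\operatorname{BSE}}(\Phi_{A_\omega})$ and repeat the proof of Theorem \ref{main} verbatim, keeping track of norms.
\begin{enumerate}
\item By \cite[Theorem 4(i)]{takahasi1990} there is a net $(f_\alpha)$ in $A_\omega$ with $\|f_\alpha\|_\omega\le\|\sigma\|_{\operatorname{BSE}}$ and $\widehat{f_\alpha}\to\sigma$ pointwise on $\Phi_{A_\omega}$.
\item By Banach--Alaoglu, a subnet converges weak-$\ast$ to some $\Phi\in A_\omega^{\ast\ast}$. Lower semicontinuity gives $\|\Phi\|\le\|\sigma\|_{\operatorname{BSE}}$.
\item The $L$-decomposition writes $\Phi=g+\Phi_{\text{sing}}$ with $g\in A_\omega$ and $\|g\|_\omega\le\|\Phi\|\le\|\sigma\|_{\operatorname{BSE}}$.
\item The argument via Lemma \ref{lem:character_approx} (first on the $Z_{k,r}$, then letting $r\nearrow1$, then $k\to\infty$) shows $\widehat g=\sigma=\widehat f$ on $\Phi_{A_\omega}$.
\end{enumerate}

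\emph{Identifying $g$ with $f$.} It remains to show $g=f$, which gives $\|f\|_\omega=\|g\|_\omega\le\|\widehat f\|_{\operatorname{BSE}}$. This requires semisimplicity of $A_\omega$, which I expect to be the only real point to check. Let $h=f-g$ have coefficients $(a(n))$, so that $\widehat h\equiv0$ on $\Phi_{A_\omega}$.
\begin{itemize}
\item First restrict to the semicharacters supported on the first $k$ primes with values $z_1,\ldots,z_k\in\mathbb D$. These lie in $\mathbb D^\infty\subseteq\Phi_{A_\omega}$, so $\widehat h$ vanishes there.
\item On these semicharacters $\widehat h$ is the absolutely convergent power series $\sum_{n\in S_k}a(n)z_1^{\alpha_1}\cdots z_k^{\alpha_k}$ on the closed polydisc $\mathbb D^k$. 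Since this series vanishes identically on $\mathbb D^k$, all its coefficients vanish, i.e.\ $a(n)=0$ for $n\in S_k$.
\item As $k$ is arbitrary and $\bigcup_k S_k=\mathbb N$, we get $h=0$.
\end{itemize}
Hence $A_\omega$ is semisimple, $g=f$, and combining with the upper bound yields $\|\widehat f\|_{\operatorname{BSE}}=\|f\|_\omega$.
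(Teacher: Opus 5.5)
Your proposal is correct and follows the paper's proof: you take the direct upper bound, then rerun the construction of Theorem~\ref{main} with $\sigma=\widehat f$ to get an element $g$ with $\widehat g=\widehat f$ and $\|g\|_\omega\le\|\widehat f\|_{\operatorname{BSE}}$. Your explicit semisimplicity check, via uniqueness of absolutely convergent power series on $\mathbb D^k$, supplies the identification $g=f$ that the paper's proof uses only tacitly.
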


\begin{proof}
Let $f \in A_\omega$. The upper bound $\|\widehat{f}\|_{\operatorname{BSE}} \leq \|f\|_\omega$ follows from the definition of the BSE inequality. Conversely, let $\sigma = \widehat{f} \in C_{\operatorname{BSE}}(\Phi_{A_\omega})$. In the proof of the preceding main theorem, we proved that any such function originates from an element $f \in A_\omega$ constructed via the $L$-projection of a weak-$\ast$ cluster point $\Phi$ in the bidual. During that construction, we established $\|f\|_\omega \leq \|\Phi\| \leq \|\sigma\|_{\operatorname{BSE}}$. Since $\sigma = \widehat{f}$, $\|f\|_\omega \leq \|\widehat{f}\|_{\operatorname{BSE}}$.
\end{proof}

\end{document}